\newtheorem{thm}{Theorem}
\newtheorem{pro}{Proposition}
\newdefinition{rmk}{Remark}
\newproof{pf}{Proof}
\newcommand{\g}{\mathfrak{g}}
\newcommand{\J}{\mathcal{J}}
\newcommand{\oen}{\mathfrak{o}(n)}
\newcommand{\otrzy}{\mathfrak{o}(3)}
\newcommand{\eseldwa}{\mathfrak{sl}(2,\mathbb{R})}
\begin{document}

\begin{frontmatter}

\title{Six-dimensional product Lie algebras admitting integrable complex structures}

\author[mymainaddress]{Andrzej Czarnecki}
\cortext[mycorrespondingauthor]{Corresponding author}
\ead{andrzejczarnecki01@gmail.com}

\author[mymainaddress]{Marcin Sroka}
 
\ead{marcin.sroka@student.uj.edu.pl}

\address[mymainaddress]{Jagiellonian University, \L ojasiewicza 6, 30-348 Krakow, Poland}

\begin{abstract}
We classify the 6-dimensional Lie algebras of the form $\g\times\g$ that admit an integrable complex structure. We also endow a Lie algebra of the kind $\oen\times\oen$ ($n\geq 2$) with such a complex structure. The motivation comes from geometric structures \`a~la~Sasaki on $\g$-manifolds.
\end{abstract}

\begin{keyword}
Lie algebras, left-invariant complex structures, integrable complex structures
\MSC[2010] 17B40, 53C15, 53C30
\end{keyword}

\end{frontmatter}

\section{Introduction.}

A complex structure on a Lie algebra $\mathfrak{h}$ is an endomorphism $\J: \mathfrak{h} \rightarrow \mathfrak{h}$ such that $\J^2= -id$. It corresponds to a left invariant almost complex structure on any Lie group $H$ with $T_eH=\mathfrak{h}$. We say that a complex structure $\J$ on $\mathfrak{h}$ is integrable if
\begin{align*}
N(v,w):=[v,w]+\J[\J v,w]+\J[v,\J w]-[\J v,\J w]=0
\end{align*}
for any $v,w \in \mathfrak{h}$. By the Newlander-Nirenberg theorem, via a left invariant trivialisation of the tangent bundle of $H$, this condition is likewise equivalent to the integrability of the corresponding left invariant almost complex structure on $H$.

Classification of integrable complex structures on real Lie algebras is a well established problem, cf. a summary of results on their existence in \cite{classif}. In dimension 6, the question is settled only for special -- abelian -- complex structures, cf. \cite{andr}, and for nilpotent algebras, cf. \cite{nilmanifolds, sal}. The present paper focuses on a different class of 6-dimensional Lie algebras that split as a product $\g \times \g$ for a 3-dimensional Lie algebra $\g$. We identify all such algebras admitting integrable complex structures. This problem was studied in the special cases of $\otrzy \times \otrzy$ and $\eseldwa \times \eseldwa$ by Magnin in \cite{mag1, mag2}, where he also classified all possible integrable complex structures. Such complete description will not be our concern here, but we note that the classification given in \cite{nilmanifolds} covers types (1), (2), and (3) of our Proposition \ref{alg}.

Our main motivation comes from differential geometry. As explained in detail in \cite{MAR}, the existence of complex structures on 6-dimensional product algebras has immediate applications to the recently developed theory of non-abelian, higher-dimensional structures \`a la Sasaki. To get an idea of the problem, consider a 3-dimensional Lie group $G$ acting freely on an odd-dimensional manifold $M$. Suppose that this action preserves some transverse complex structure -- a complex structure on the sub-bundle $\nu$ transverse to the orbits. We cannot hope to extend this complex structure to the whole tangent bundle -- since the dimension is odd -- but an interesting problem is to extend it to the $T\left(M\times G\right)$. Since the tangent space at a point $x$ in that product splits (as a vector space) $T_x\left(M\times G\right)=\nu_x\times\g\times \g$, this rises -- and reduces to -- the question of finding an integrable complex structure on $\g\times \g$. Then the transverse complex structure on $M$ can be studied in terms of an ordinary complex structure on $M\times G$. Recall that a manifold $S$ is Sasakian if its Riemannian cone $S\times \mathbb{R}$ is K\"ahler -- and thus the above approach is a starting point for natural generalisations.

We point out that an integrable complex structure on its Lie algebra does not turn a Lie group into a complex Lie group. In fact, every compact Lie group of even dimension admits an integrable left invariant complex structure, cf. \cite{1, 2}, while it is well-known that only tori can be compact complex Lie groups.

In the last section we provide an explicit integrable complex structure for every algebra of the type $\oen\times \oen$.

\section{Complex structures on 6-dimensional product algebras.}

Recall that the 3-dimensional Lie algebras were classified into 9 types by Bianchi. We use a variant, a classification from \cite{bia} by the dimension of the derived algebra and Jordan decomposition of certain automorphism acting upon it. We include the statement for convenience.

\begin{pro}{\cite{bia}}\label{alg} Let $e_1$, $e_2$ and $e_3$ be a basis of $\mathbb{R}^3$. Up to isomorphism of Lie algebras, the following list yields all Lie brackets on $\mathbb{R}^3$
{\renewcommand\labelenumi{(\theenumi)}
\begin{enumerate}
\item $[e_1,e_3]= 0$, $[e_2,e_3]= 0$, $[e_1,e_2]= 0$ 
\item $[e_1,e_3]= 0$, $[e_2,e_3]= 0$, $[e_1,e_2]= e_1$
\item $[e_1,e_3]= 0$, $[e_2,e_3]= 0$, $[e_1,e_2]= e_3$ 
\item $[e_1,e_3]= e_1$, $[e_2,e_3]= \theta e_2$, $[e_1,e_2]= 0$ for $\theta\neq 0$ (the case $\theta = 1 $ is considered to be Bianchi's ninth type)
\item $[e_1,e_3]= e_1$, $[e_2,e_3]= e_1+e_2$, $[e_1,e_2]= 0$ 
\item $[e_1,e_3]= \theta e_1-e_2$, $[e_2,e_3]= e_1+\theta e_2$, $[e_1,e_2]= 0$  for $\theta \neq 0$
\item $[e_1,e_3]= e_2$, $[e_2,e_3]= e_1$, $[e_1,e_2]=e_3$ 
\item $[e_1,e_3]=-e_2 $, $[e_2,e_3]=e_1 $, $[e_1,e_2]= e_3$ 
\end{enumerate}}
\end{pro}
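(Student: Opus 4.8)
Although the classification is classical, here is the route I would take. The idea is to stratify the Lie brackets on $\mathbb{R}^3$ by the integer $d:=\dim[\g,\g]\in\{0,1,2,3\}$ and to pin down the isomorphism type on each stratum; the continuous parameter $\theta$ of types (4) and (6) then surfaces as the residual invariant of a single linear automorphism.

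The extreme strata are quick. If $d=0$ the algebra is abelian --- type (1). If $d=3$ then $\g=[\g,\g]$ is perfect, so a nonzero solvable radical $\mathfrak r$ is impossible: $\g/\mathfrak r$ would be semisimple of dimension $\le 2$, hence zero, i.e.\ $\g$ solvable --- absurd. Thus $\g$ is semisimple, and for dimension reasons simple; invoking the standard classification of $3$-dimensional real simple Lie algebras, $\g$ is $\eseldwa$ or $\otrzy$ --- told apart by the signature of the Killing form --- and these are types (7) and (8).

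For $d=1$ write $[\g,\g]=\mathbb{R}v$. If $v$ is central, complete it to a basis $v,w_1,w_2$; then $[w_1,w_2]\in\mathbb{R}v\setminus\{0\}$, and rescaling gives type (3). If $v$ is not central, some $w$ satisfies $[w,v]=v$ after rescaling; since $\operatorname{im}(\operatorname{ad}_v)\subseteq\mathbb{R}v$, the plane $\mathfrak k:=\ker(\operatorname{ad}_v)$ contains $v$ but not $w$, and any $u_0\in\mathfrak k\setminus\mathbb{R}v$ can be corrected to a central $u:=u_0-\lambda v$, where $[w,u_0]=\lambda v$. In the basis $v,w,u$ the brackets are those of type (2).

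The substance is the stratum $d=2$. Let $\mathfrak a:=[\g,\g]$, a $2$-dimensional ideal. \emph{First}, $\mathfrak a$ is abelian: otherwise $\mathfrak a=\langle a_1,a_2\rangle$ with $[a_1,a_2]=a_1$, and feeding the triple $(e_3,a_1,a_2)$ (any $e_3\notin\mathfrak a$) into the Jacobi identity forces, after a short computation, $\operatorname{ad}_{e_3}(\mathfrak a)\subseteq\mathbb{R}a_1$, whence $[\g,\g]\subseteq\mathbb{R}a_1$ is one-dimensional --- contradiction. \emph{Then}, writing $\g=\mathfrak a\oplus\mathbb{R}e_3$ as vector spaces with $\mathfrak a$ an abelian ideal, the whole bracket is recorded by $A:=\operatorname{ad}_{e_3}|_{\mathfrak a}\in\mathfrak{gl}(2,\mathbb{R})$; the Jacobi identity is then automatic, $[\g,\g]=\operatorname{im}A$ forces $A$ invertible, and the admissible changes of presentation act on $A$ by conjugation (a basis change in $\mathfrak a$) and by nonzero scalars (replacing $e_3$ by $ce_3+a$ multiplies $A$ by $c$, as $\mathfrak a$ is abelian). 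So the $d=2$ classification is exactly that of invertible real $2\times2$ matrices up to conjugacy and real rescaling, i.e.\ the real Jordan form normalised by one scalar: a pair of real eigenvalues yields $\operatorname{diag}(1,\theta)$ and type (4), a genuine Jordan block yields type (5), and a conjugate pair $\theta\pm i$ yields type (6). The step I expect to be fiddly is this last one carried out \emph{on the nose}: besides writing down the normal forms one must account for the residual symmetries --- $\theta\leftrightarrow 1/\theta$ in (4), $\theta\leftrightarrow-\theta$ in (6), and the diagonal-versus-Jordan dichotomy at a repeated real eigenvalue --- to certify that the displayed list is both exhaustive and non-redundant precisely as stated; as already noted, the $d=3$ stratum also rests on the external fact that $\eseldwa$ and $\otrzy$ exhaust the $3$-dimensional real simple Lie algebras.
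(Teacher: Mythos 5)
The paper offers no proof of this proposition at all --- it is imported from \cite{bia}, and the paper's own description of that source (``by the dimension of the derived algebra and Jordan decomposition of certain automorphism acting upon it'') is exactly the stratification you propose. Your treatment of the strata $d=0,1,3$ is correct (modulo the quoted classification of $3$-dimensional real simple algebras, which you acknowledge), your verification that $[\g,\g]$ is abelian when $d=2$ is sound, and the reduction of the $d=2$ stratum to invertible real $2\times2$ matrices up to conjugation and nonzero rescaling is the right mechanism; redundancy ($\theta\leftrightarrow1/\theta$, etc.) is not at issue since the proposition only claims exhaustiveness.

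The one step that does not close is precisely the one you defer as ``fiddly'': the assertion that a conjugate pair of eigenvalues ``yields type (6).'' Rescaling a matrix with eigenvalues $a\pm bi$, $b\neq0$, by $1/b$ produces $\theta\pm i$ for \emph{every} real $\theta$, including $\theta=0$, whereas type (6) as printed carries the restriction $\theta\neq0$. The excluded algebra is the Euclidean algebra $\mathfrak{e}(2)$ (Bianchi VII$_0$: $[e_1,e_3]=-e_2$, $[e_2,e_3]=e_1$, $[e_1,e_2]=0$), which is listed as a separate item in \cite{bia} and is not isomorphic to anything on the displayed list: an isomorphism changes the spectrum of $\mathrm{ad}_{e_3}$ restricted to the derived algebra only by conjugation and a nonzero real factor, so a purely imaginary pair can never be carried to the real spectra of (4) or (5), nor to the pairs $\theta\pm i$ with $\theta\neq0$ of (6), and the remaining types are excluded because their derived algebras have the wrong dimension. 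A faithful execution of your argument therefore \emph{refutes} the proposition as literally stated rather than proving it: the condition $\theta\neq0$ is genuinely needed in (4), where $\theta=0$ collapses the derived algebra to dimension $1$, but it must be dropped in (6). You should say this explicitly instead of silently absorbing $\theta=0$ into type (6). (The omission is harmless for the rest of the paper: $e_3\in\mathfrak{e}(2)$ has a non-real adjoint eigenvalue, so $\mathfrak{e}(2)\times\mathfrak{e}(2)$ admits an integrable complex structure by case 1 of the existence proof of Theorem \ref{main}.)
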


We fix some notation. Whenever we write $(x,y,z)\in\g$, it is understood in the appropriate basis above. If any other basis $\{u,v,w\}$ is used, we write $Xu+Yv+Zw$.

The direct product $\g\times\g$ inherits the bracket operation on each factor from $\g$: $[(u,v),(t,s)]=([u,t],[v,s])$. We keep the distinction between two copies of $\g$ inside $\g\times\g$ by adding asterisks to the second copy. Any vector decorated with an asterisk is understood to lie in $\g^*=0\times \g$, while those without it lie in $\g=\g\times 0$. We tacitly use the natural isomorphism between two copies, $(v,0)^*=(0,v)$. We also distinguish the two components of a complex structure $\J$ -- it will be convenient to work with $J$ and $J^*$ as in $\J v=(Jv,J^*v)$ to indicate its $\g$- and $\g^*$-parts separately. As a rule, virtually every vector on which we act in the lengthy proofs lies in $\g$.

To finish the preliminaries we note the following to use frequently in what follows.

\begin{rmk}\label{redu}
Let $\mathfrak{h}$ be a Lie algebra with a complex structure $\J$. One can easily check that for any $v,w \in \mathfrak{h}$
\begin{align*}
N(v,w)= - N(\J v,\J w)= \J N(\J v,w)= \J N(v,\J w)
\end{align*}
\end{rmk}




\begin{thm}\label{main}
A real 6-dimensional Lie algebra of the form $\g \times \g$ (for some real 3-dimensional Lie algebra $\g$) carries an integrable complex structure if $\g$ is of type (1), (2), (3), (6), (7), (8), and (4) with parameter $\theta=1$ in Proposition \ref{alg} above. There is however no such structure for the type (5), and for all other parameters in (4).

\end{thm}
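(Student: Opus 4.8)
The strategy is to split the problem into a positive part (exhibiting an integrable $\J$ for types (1), (2), (3), (6), (7), (8), and (4) with $\theta=1$) and a negative part (a nonexistence argument for type (5) and for (4) with $\theta\neq 1$). For the positive part I would proceed case by case, but grouping the types by how much of the structure can be imported from the literature: type (1) is abelian, and types (1), (2), (3) are nilpotent, so integrable complex structures exist by \cite{nilmanifolds, sal} (indeed the remark in the introduction flags this). For type (8), which is $\mathfrak{o}(3)$, Magnin's work \cite{mag1} already supplies a complex structure, and the final section's construction for $\oen\times\oen$ covers it as well; for type (7), which is $\eseldwa$, use \cite{mag2}. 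That leaves types (6) and (4) with $\theta=1$. For these I would write down an explicit endomorphism $\J$ in the given basis -- a natural first guess is a ``mixing'' structure that sends a basis vector of $\g$ to one in $\g^*$ and vice versa, e.g. something like $\J e_3 = e_3^*$, and a rotation-type action on $\mathrm{span}(e_1,e_2)$ coupled across the two copies -- and then verify $N\equiv 0$ by checking it on basis pairs, using Remark \ref{redu} to cut the number of pairs roughly in half.

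For the negative part, which I expect to be the main obstacle, the idea is to assume an integrable $\J$ exists on $\g\times\g$ for $\g$ of type (5) or type (4) with $\theta\neq 1$, and derive a contradiction from the structure of the nilradical / derived algebra. Write $\J v = (Jv, J^*v)$ as in the paper's notation. The key invariant-theoretic observation is that the derived algebra $[\g\times\g,\g\times\g] = \g' \times \g'$ is $2$-dimensional (one-dimensional in each factor) for types (4) and (5), and more to the point, the adjoint action has eigenvalue data that is rigid: in type (4) the operator $\mathrm{ad}_{e_3}$ on $\mathrm{span}(e_1,e_2)$ has real eigenvalues $1$ and $\theta$, and in type (5) it is a single Jordan block with eigenvalue $1$. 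The plan is to show that integrability of $\J$ forces a compatibility between $\J$ and these adjoint operators -- concretely, the Nijenhuis condition applied to pairs involving $e_3$, $e_3^*$ and vectors in the derived algebra should force $\J$ to intertwine $\mathrm{ad}_{e_3}\oplus\mathrm{ad}_{e_3^*}$ with itself up to sign, i.e. to (anti)commute with it in a suitable sense. Since a complex structure has no real eigenvalues, this is impossible unless the real spectrum is symmetric about $0$ in the appropriate way, which fails precisely when $\theta\neq 1$ (for type (4)) and always fails for the non-semisimple Jordan block of type (5).

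Concretely, the computation I anticipate runs as follows. Let $E = [\g\times\g,\g\times\g]$, a $\J$-invariant-up-to-correction subspace; first establish, using $N(v,w)=0$ with $v,w$ ranging over the abelian ideal $\mathrm{span}(e_1,e_2,e_1^*,e_2^*)$, that $\J$ preserves this $4$-dimensional abelian ideal $\mathfrak{a}$ (in types (4) and (5) the brackets $[e_1,e_2]=0$ etc. make $\mathfrak{a}$ abelian, and the Nijenhuis tensor on an abelian subspace gives $N(v,w) = -[\J v,\J w]$, forcing $\J\mathfrak{a}\subseteq$ something one can pin down). Then restrict attention to how $\J$ pairs $e_3, e_3^*$ against $\mathfrak{a}$, and expand $N(e_3, a) = 0$ and $N(e_3, \J a)=0$ for $a\in\mathfrak{a}$; these yield linear relations equating $\mathrm{ad}$-operators conjugated by $\J$ to $\pm\mathrm{ad}$-operators. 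Taking traces, determinants, or eigenvalues of the resulting identity on $\mathfrak{a}$ produces a numerical constraint on $\theta$: the trace of $\mathrm{ad}_{e_3}$ on $\mathrm{span}(e_1,e_2)$ is $1+\theta$, and the forced symmetry makes this vanish or match its negative, singling out $\theta=1$ (or ruling out the nilpotent Jordan block). The delicate point -- and the part that will require the ``lengthy proofs'' the authors allude to -- is that $\J$ need not preserve the $\g$/$\g^*$ splitting, so $J$ and $J^*$ genuinely mix the two copies of $\mathrm{ad}_{e_3}$; one must track the $4\times 4$ block structure carefully rather than arguing one factor at a time, and rule out every configuration of how $\J$ can route $e_3$ and $e_3^*$ into $\mathfrak{a}$ and back.
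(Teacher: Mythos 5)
Your positive part is acceptable in outline (and in fact the paper does it more uniformly: for every admissible type one picks $u$ with $\mathrm{ad}_u$ having either a complex eigenvalue or a repeated real eigenvalue with two independent eigenvectors, sets $\J u=u^*$, $\J v=w$, $\J v^*=w^*$, and the same two-line Nijenhuis computation covers all seven cases at once). But the negative part, which you correctly identify as the main obstacle, has a genuine gap at its central step. Your plan hinges on first proving that $\J$ preserves the $4$-dimensional abelian ideal $\mathfrak{a}=\mathrm{span}(e_1,e_2,e_1^*,e_2^*)$, and the mechanism you offer for this is the identity $N(v,w)=-[\J v,\J w]$ for $v,w\in\mathfrak{a}$. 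That identity is false: for $v,w$ in an abelian subalgebra one only gets $N(v,w)=\J[\J v,w]+\J[v,\J w]-[\J v,\J w]$, and the first two terms do not vanish unless you already know $\J v,\J w$ commute with $\mathfrak{a}$ -- which is essentially what you are trying to prove. (If instead you assume $\J\mathfrak{a}=\mathfrak{a}$, all three brackets lie in $[\mathfrak{a},\mathfrak{a}]=0$ and the equation is vacuous, so it pins down nothing.) Since $\J$-invariance of $\mathfrak{a}$ is not automatic -- only $[\mathfrak{h},\mathfrak{h}]+\J[\mathfrak{h},\mathfrak{h}]$ is a priori $\J$-invariant, and that can be everything -- you would have to rule out all the ways $\J$ can move $\mathfrak{a}$ off itself, which is exactly the kind of multi-case work the paper does in its step ``there is no non-trivial $\J$-invariant subspace of $\g$'' (three sub-cases according to $[V,V]$), and which your sketch does not supply.

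A second, smaller problem is the endgame. A ``forced symmetry'' of the spectrum of $\mathrm{ad}_{e_3}$ on $\mathrm{span}(e_1,e_2)$ of the form ``trace vanishes or matches its negative'' would single out $\theta=-1$, not $\theta=1$; but type (4) with $\theta=-1$ admits no structure either. The correct dichotomy (visible in the paper's existence proof) is that some adjoint must have either a genuinely complex eigenvalue or a repeated real eigenvalue with two independent eigenvectors; for type (4) the eigenvalues $1,\theta$ are real and distinct unless $\theta=1$, and for type (5) the repeated eigenvalue has a one-dimensional eigenspace. Turning that heuristic into a proof is where the real work lies: the paper does it by locating a quasi-invariant vector $u$ (with $Ju=\lambda u$ in $\g$), showing it must be generic, extracting nine scalar equations from the $\g$-parts of $N(u,e_1)$, $N(u,e_2)$, $N(e_1,e_2)$ to force $\lambda=0$, and only then obtaining the contradiction from the $\g^*$-part: the adjoint $[J^*u,\cdot]$ acquires a characteristic polynomial with non-real roots (type (4)) or two incompatible values of one coefficient (type (5)). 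Your proposal never engages with the $\g^*$-side constraint, which is where the contradiction actually materializes, so as it stands the argument does not close.
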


\begin{proof}[Proof of the existence]
We consider two cases.
\begin{enumerate}
\item There is $u\in\g$ such that the adjoint $[u,\cdot]$ has a complex eigenvalue $A+Bi$ with $B \neq 0$. Then it has a Jordan basis $\{u, v, w \}$ with $[u,v]  =   Av+Bw$, $[u,w] = -Bv+Aw$ and there is a complex structure $\J$ given by
\begin{align*}
\J u &=  u^*  \\
\J v &=  w \\
\J v^* &=  w^*
\end{align*}
which of course suffices to define it completely. Then
\begin{itemize}
\item $N(u,u^*)=N(v,w)=N(v^*,w^*)=0$ because it concerns only 2-dimensional subspaces invariant by $\J$;
\item $N(u,v)=0$ and $N(u^*,v^*)=0$ is sufficient for every other pair of vectors by Proposition \ref{redu}. We compute, by symmetry between $\g$ and $\g ^*$, only the first case.
\begin{align*}
& [u,v]+\J[\J u,v]+\J[u,\J v]-[\J u,\J v]= Av+Bw+0+\J[u,w]-0 \\
=&\quad Av+Bw+\J(-Bv+Aw)=Av+Bw-Bw-Av=0 
\end{align*}
\end{itemize}

This settles cases (6), (7), and (8) in Proposition~\ref{alg} -- the appropriate vectors $u, v, w$ are collected in Proposition~\ref{STR} far below.
\item There is a non-zero $u\in\g$ such that the adjoint $[u,\cdot]$ has a double (resp. triple, if it is 0) real eigenvalue $\alpha$ with two (resp. three) linearly independent eigenvectors $v, w$, that form together a Jordan basis $\{u,v,w\}$. Define $\J$ by
\begin{align*}
\J u & =  u^*  \\
\J v & =  w \\
\J v^* & =  w^*
\end{align*}
Then, as before, the only conditions to check are $N(u,v)=0$ and $N(u^*,v^*)=0$, readily computed to be zero since 
\begin{align*}
& [u,v]+\J[\J u,v]+\J[u,\J v]-[\J u,\J v]= \alpha v+0+\J[u,w]-0 \\
=&\quad \alpha v + \J(\alpha w)= \alpha v - \alpha v 
\end{align*}
This settles cases (1), (2), (3), and (4) with $\theta=1$ -- the appropriate vectors are again given in Proposition~\ref{STR}.
\end{enumerate}
\end{proof}

Before we proceed, we introduce one additional convention: unless stated otherwise, the $J^*$-part of $\J$ will be suppressed and so the following (many) Nijenhuis brackets will often be understood as expressions in $\g$ -- without loss of generality, because $N\equiv 0$ iff both its $\g$- and $\g^*$-parts are. We will also forgo writing "$=0$" in subsequent equations: every Nijenhuis bracket in sight (or $\g$-, or $\g^*$-part thereof) is understood to equal 0. We feel that this helps keep the exposition shorter without causing too much confusion.

\begin{proof}[Proof of the non-existence of integrable complex structures -- case (4) with $\theta \neq 1$]
We begin with close examination of the algebra in question. In the canonical basis, the adjoint endomorphism $[(x,y,z),\cdot]$ of a vector is given by
$$
\left[
\begin{array}{ccc}
 -z & 0 & x \\
 0 & -\theta z & \theta y \\ 
 0 & 0 & 0 
\end{array}
\right]
$$
Its characteristic polynomial is $-t(t+z)(t+\theta z)$. If $z$ is non-zero (which is the only interesting case as we will see shortly), then in a Jordan basis $\{u,v,w\}=\{(x,y,z), e_1, e_2 \}$ this adjoint takes the form
$$
\left[
\begin{array}{ccc}
 0 & 0 & 0 \\
 0 & -z & 0 \\ 
 0 & 0 & -\theta z 
\end{array}
\right]
$$
and the brackets are $[u,v]=-zv$, $[u,w]=-z\theta w$, $[v,w]=0$.

Suppose that there is an integrable complex structure $\J$ on $\g\times\g$. We will proceed as follows:
{\renewcommand\labelenumi{(\theenumi)}
\begin{enumerate}
\item There is a vector $u$ such that $\J u=Ju+J^*u=\lambda u + J^*u$ for some real $\lambda$.
\item There is no $V$, non-trivial $\J$-invariant subspace of $\g$.
\item Any vector $u$ as in point (1) must be generic -- adjoint $[u,\cdot]$ must have a non-zero (real) eigenvalue.
\end{enumerate}
}

Regarding (1) -- this is simple linear algebra: the characteristic polynomial of $J\,:\,\g\longrightarrow \g$ is of degree 3 and thus has a real root. We call a resulting $u$ a \emph{quasi-invariant} vector. We have of course a degree of freedom in the choice of such a vector.

Regarding (2) -- suppose to the contrary. Such a $V$ must be 2-dimensional. We consider the three cases:

\begin{enumerate}
\item $[V,V]=0$. Then $V=span\{e_1,e_2\}$. There is a quasi-invariant $u=(x,y,1)$ (note the last coordinate) and for an $a\in V$ we compute $N(u,a)$.
\begin{align*}
& [u,a]+J[Ju,a]+J[u,Ja]-[Ju,Ja] \\
=&\quad [u,a]+\lambda J[u,a] + J[u,Ja]-\lambda [u,Ja]  \\
=&\quad [e_3,a]+\lambda J[e_3,a] + J[e_3,Ja]-\lambda [e_3,Ja] 
\end{align*}
because the other terms vanish. For $a=e_1$ and $\J e_1=Je_1=Xe_1+Ye_2$ (which forces~$Je_2=\frac{-1-X^2}{Y}e_1-Xe_2$) we get
\begin{align*}
& [e_3,e_1]+\lambda J[e_3,e_1] + J[e_3,Je_1]-\lambda [e_3,Je_1]  \\
=&\quad -e_1-\lambda Je_1 + J[e_3,Xe_1+Ye_2]-\lambda [e_3,Xe_1+Ye_2]  \\
=&\quad -e_1-\lambda Xe_1-\lambda Ye_2 - X(Xe_1+Ye_2)-Y\theta(\frac{-1-X^2}{Y}e_1-Xe_2)+\lambda Xe_1+\lambda\theta Ye_2 &
\end{align*}
or
$$
\begin{cases}
-1-\lambda X - X^2 +\theta(1+X^2)+\lambda X = (1-\theta)(-1-X^2) =0 \\ 
-\lambda Y-XY+\theta XY+\lambda\theta Y=(1-\theta)(-XY-\lambda Y)=0
\end{cases}
$$
which delivers the contradiction -- remember that $\theta\neq 1$.
\item $[V,V]\neq 0$, $[V,V]\subset V$. This must be a 1-dimensional subspace. A non-zero vector $v'$ in this subspace is a non-zero eigenvector for any linearly independent vector $u' \in V$. Without loss of generality, $u'=\alpha e_1+\beta e_2 + e_3$ and $v'$ is either $e_1$ or $e_2$. The other vector (and an eigenvector to the non-zero eigenvalue $\kappa$, equal to 1 or $\theta$) does not lie in $V$ and completes a Jordan basis $\{u',v',w'\}$ for the adjoint of $u'$. There is a quasi-invariant $u=xu'+yv'+w'$ (note the last coefficient). We compute the full Nijenhuis bracket $N(u,v')$, with $\J v'=Jv'=Xu'+Yv'$
\begin{align*}
&[u,v']+\J[\J u,v']+\J[u,\J v']-[\J u,\J v'] = [u,v']+\lambda\J[u,v']+\J[u,Jv']-\lambda [u,Jv'] &\\
=&\quad [xu'+yv'+w',v']+\lambda \J[xu'+yv'+w',v']+\J[xu'+yv'+w',Xu'+Yv']-\\
&\qquad-\lambda [xu'+yv'+w',Xu'+Yv'] &\\
=&\quad [xu',v']+\lambda \J[xu',v'] + \J[xu',Yv']+\J[yv'+w',Xu']- \lambda [xu',Yv']- \\
&\qquad-\lambda [yv'+w',Xu'] &\\
=&\quad [xu',v']+\lambda J[xu',v'] + J[xu',Yv']+ J[yv',Xu'] + \J[w',Xu']- \lambda [xu',Yv']- \\
&\qquad -\lambda [yv'+w',Xu']
\end{align*}
We see that this expresses $-X\kappa \J w'$ as a vector in $\g$, which is possible only if it is zero ($\J w'$ must not lie in $\g$, since $\g$ would then be an invariant subspace). But neither $\kappa$ nor $X$ can be zero -- if $X$ was 0, $\J$ would have a invariant direction, $v'$ --  a contradiction. We will use this argument repeatedly.
\item $[V,V]\neq 0$, $[V,V]$ is not in $V$. $[V,V]$ is contained in $[\g,\g]=span\{e_1,e_2\}$ which intersect $V$ along a 1-dimensional subspace $\mathbb{R}\cdot u'$. Take $w' \in V$ linearly independent of $u'$, observe we may assume $w'=(\alpha,\beta,1)$. Put $v'=[u',w'] \in [V,V]$, then the adjoint of $u'$ has a Jordan basis $\{u',v',w'\}$. Observe that we have $[v',w']=-\theta u'+(1+\theta)v'$ and $[u',v']=0$. The quasi-invariant vector $u=xu'+v'+zw'$ (note the middle coefficient) and $u'$ satisfying $\J u'=Ju'=Xu'+Yw'$ give us the full $N(u,u')$
\begin{align*}
& [u,u']+\lambda \J[u,u'] + \J[u,\J u']-\lambda [u,\J u'] &\\
=&\quad [xu'+v'+zw',u']+\lambda \J[xu'+v'+zw',u']+\J[xu'+v'+zw',Xu'+Yw']-\\
&\qquad -\lambda[xu'+v'+zw',Xu'+Yw'] &\\
=&\quad [zw',u']+\lambda\J[zw',u']+\J[zw',Xu']+\J[xu'+v',Yw']-\lambda[xu'+v',Yw']-\lambda[zw',Xu'] &\\
=&\quad -zv'-\lambda z\J v'-zX\J v'+xY\J v'+(1+\theta)Y\J v'-\theta Y\J u'-xY\lambda v'-\\
&\qquad -\lambda  Y(1+\theta)v'+\lambda\theta  Yu'+zX\lambda v' &\\
=&\quad (-z-xY\lambda-(1+\theta)\lambda  Y +zX\lambda)v' + Y\lambda\theta u' -Y\theta J u'+ (-\lambda z-zX+xY+(1+\theta)Y)\J v' 
\end{align*}
The last term must be zero, since $Jv'$ does not lie in $\g$. Hence, expanding $J u'$, we have
\begin{align*}
 & (-z-xY\lambda-(1+\theta)\lambda Y +zX\lambda)v' + (Y\lambda-Y\theta X)u' -(Y^2\theta)w' &
\end{align*}
and since the three vectors form a basis, each term is zero. But the $w'$-coefficient, $\theta Y^2$, cannot be zero since $\J$ would have an invariant direction, a contradiction.
\end{enumerate}

This proves (2) -- $\J$ does not have a non-trivial invariant subspace in $\g$. This in turn means that for any basis $\{u,v,w\}$ of $\g$
\begin{itemize}
\item $\{u,v,w,\J u,\J v,\J w\}$ spans a 6-dimensional space so it is a basis of $\g\times\g$.
\item $\{J^*u,J^*v,J^*w\}$ is a basis of $\g^*$.
\end{itemize}

We will now prove (3) -- a quasi-invariant vector $u$ cannot be of the form $(x,y,0)$ (or: its adjoint must have a non-zero real eigenvalue). Suppose to the contrary that
\begin{enumerate}
\item $u=e_1$ -- then $N(e_1,e_3)$ reads (for $Je_3=Xe_1+Ye_2+Ze_3$)
\begin{align*}
& [e_1,e_3]+\lambda J[e_1,e_3]+J[e_1,Je_3]-\lambda[e_1,Je_3] \\
=&\quad e_1+\lambda Je_1+ZJe_1-\lambda Ze_1 = (1+\lambda^2)e_1
\end{align*}
because the two other terms cancel out, a contradiction.
\item $u=e_2$ and compute $N(e_2,e_3)$ (with $Je_3$ as before)
\begin{align*}
&[e_2,e_3]+\lambda J[e_2,e_3]+J[e_2,Je_3]-\lambda[e_2,Je_3] \\
=&\quad \theta e_2+\lambda\theta Je_2+Z\theta Je_2-\lambda\theta Ze_2 = \theta(1+\lambda^2)e_2
\end{align*}
which gives the contradiction just as before. This two special cases are easily seen to preclude any vector $u=(x,y,0)$ from being quasi-invariant. This proves (3).
\end{enumerate}

Without loss of generality, fix a quasi-invariant vector $u=(x,y,1)$ and its Jordan basis $\{u,e_1,e_2\}$. We first compute $N(u,e_1)$ (with $Je_1=Xu+Ye_1+Ze_2$ and $Je_2=Au+Be_1+Ce_2$).
\begin{align*}
&[u,e_1]+\lambda J[u,e_1] + J[u,Xu+Ye_1+Ze_2]-\lambda [u,Xu+Ye_1+Ze_2] &\\ 
=&\quad -e_1-\lambda Je_1 -YJe_1-Z\theta Je_2 + \lambda Ye_1+\lambda Z\theta e_2 &\\
=&\quad (-1-\lambda Y-Y^2-Z\theta B+\lambda Y)e_1+(-\lambda Z-YZ-Z\theta C+\lambda Z\theta)e_2+ \\
&\qquad + (-\lambda X-XY-Z\theta A)u
\end{align*}
or 
$$
\begin{cases}
-1-Y^2-Z\theta B= 0 \\
-\lambda Z-YZ-Z\theta C+\lambda Z\theta= 0 \\
-\lambda X-XY-Z\theta A=0
\end{cases}
$$

Analogously for $N(u,e_2)$
\begin{align*}
& [u,e_2]+\lambda J[u,e_2] + J[u,Au+Be_1+Ce_2]-\lambda [u,Au+Be_1+Ce_2] & \\ 
=&\quad -\theta e_2-\lambda\theta Je_2 -BJe_1-C\theta Je_2 + \lambda Be_1+\lambda C\theta e_2 & \\
=&\quad (-\lambda\theta B-BY-C\theta B+\lambda B)e_1+(-\theta-\lambda\theta C -BZ-C^2\theta+\lambda C\theta)e_2+\\
&\qquad+(-\lambda \theta A-XB-C\theta A)u
\end{align*}
or 
$$
\begin{cases}
-\lambda\theta B-BY-C\theta B+\lambda B = 0 \\
-\theta -BZ-C^2\theta = 0 \\
-\lambda \theta A-XB-C\theta A=0
\end{cases}
$$

And finally for $N(e_1,e_2)$
\begin{align*}
& [e_1,e_2]+J[Je_1,e_2] + J[e_1,Je_2]-[Je_1,Je_2] & \\
=&\quad [e_1,e_2]+J[Xu,e_2] + J[e_1,Au]-[Xu,Be_1+Ce_2]-[Ye_1+Ze_2,Au] & \\ 
=&\quad -X\theta J e_2+AJe_1+XBe_1+XC\theta e_2 -AYe_1-AZ\theta e_2 & \\
=&\quad (-X\theta B+AY+XB-AY)e_1+(-X\theta C+AZ+X\theta C-AZ\theta)e_2+ \\
&\qquad +(-XA\theta+AX )u
\end{align*}
or 
$$
\begin{cases}
(1-\theta)XB =0\\
(1-\theta)AZ =0\\
(1-\theta)AX =0
\end{cases}
$$
in which the expressions in parentheses can be omitted because $\theta\neq 1$. Let us put these nine equations together
$$
\begin{cases}
XB=0 \\
AZ=0 \\
AX=0 \\
-\lambda\theta B-BY-C\theta B+\lambda B = 0 \\
-\theta -BZ-C^2\theta = 0 \\
-\lambda \theta A-XB-C\theta A=0 \\
-1-Y^2-Z\theta B= 0 \\
-\lambda Z-YZ-Z\theta C+\lambda Z\theta= 0 \\
-\lambda X-XY-Z\theta A=0 \\
\end{cases}
$$
Note that the fifth and seventh equation prevent $Z$ and $B$ from being zero and thus force $A$ and $X$ to equal zero, and thus simplify the picture. We are left with
$$
\begin{cases}
-\theta -C^2\theta = BZ \\
-1-Y^2=Z\theta B \\
-\lambda\theta-Y-C\theta+\lambda= 0 \\
\lambda \theta-Y-C\theta -\lambda= 0
\end{cases}
\text{{}\qquad  or\qquad {}}
\begin{cases}
-\theta -C^2\theta = BZ \\
-1-Y^2=Z\theta B \\
Y+C\theta= 0 \\
\lambda( \theta-1)= 0 
\end{cases}
$$
which means that $\lambda$ is zero.

Let us now compute the $\g^*$-part of the Nijenhuis brackets $N(u,e_1)$ and $N(u,e_2)$.
\begin{align*}
&\lambda J^*[u,e_1] + J^*[u,Ye_1+Ze_2]- [J^*u,J^*e_1] \\ 
=&\quad -\lambda J^*e_1 -YJ^*e_1-Z\theta J^*e_2 -[J^*u,J^*e_1]
\end{align*}

\begin{align*}
 & \lambda J^*[u,e_2] + J^*[u,Be_1+Ce_2]-[J^*u,J^*e_2] \\ 
=&\quad -\lambda\theta J^*e_2 -BJ^*e_1-C\theta J^*e_2 -[J^*u,J^*e_2]  
\end{align*}
A simple computation would also show that $[J^*e_1,J^*e_2]$ is zero but we don't need this. We have that in the basis $\{J^*u,J^*e_1,J^*e_2\}$ the adjoint $[J^*u,\cdot]$ is of the form
$$
\left[
\begin{array}{ccc}
 0 & 0 & 0 \\
 0 & -Y & -B \\ 
 0 & -Z\theta & Y 
\end{array}
\right]
$$
Note that we finally discarded $\lambda$'s and made use of $-Y=C\theta$. The characteristic polynomial of $[J^*u,\cdot]$ is $-t(t^2-ZB\theta-Y^2)$. But we can now plug in the last unused equation $Z\theta B=-1-Y^2$ to get $-t(t^2+1)$ with a complex root $i$, which a characteristic polynomial of an adjoint to a vector from $\g^*$ should not have. This contradiction shows that there is no integrable complex structure on the algebra of type (4) with $\theta \neq 1$.
\end{proof}

\begin{proof}[Proof of the non-existence of integrable complex structures -- case (5)]
We now turn to algebra (5) from Proposition~\ref{alg}. The proof is very similar -- we apologise if it appears indistinguishable -- but the computations must be adapted. We present them for completeness.

In the canonical basis, the adjoint automorphism $[(x,y,z),\cdot]$ is of the form
$$
\left[
\begin{array}{ccc}
 -z & -z & x+y \\
 0 & -z & y \\ 
 0 & 0 & 0 
\end{array}
\right]
$$
and has three possible types of the Jordan form:
\begin{enumerate}
\item if $z$ and $y$ are 0, then -- in the basis $\{u,v,w\}=\{xe_1,e_3,e_2\}$ -- it is
$$
\left[
\begin{array}{ccc}
 0 & 1 & 0 \\
 0 & 0 & 0 \\ 
 0 & 0 & 0 
\end{array}
\right]
$$
and the brackets are $[u,v]=u$, $[u,w]=0$ and $[v,w]=-\frac{1}{x}u-w$.
\item if $z$ is 0 and $y$ is not, then -- in the basis $\{u,v,w\}=\{(x,y,0),(x+y,y,0),(0,0,1)\}$ -- it is
$$
\left[
\begin{array}{ccc}
 0 & 0 & 0 \\
 0 & 0 & 1 \\ 
 0 & 0 & 0 
\end{array}
\right]
$$
and the brackets are $[u,v]=0$, $[u,w]=v$ and $[v,w]=2v-u$.
\item if $z$ is non-zero, then -- in the basis $\{u,v,w\}=\{(x,y,z),e_1,\frac{-1}{z}e_2 \}$ -- it is
$$
\left[
\begin{array}{ccc}
 0 & 0 & 0 \\
 0 & -z & 1 \\ 
 0 & 0 & -z 
\end{array}
\right]
$$
and the brackets are $[u,v]=-zv$, $[u,w]=v-zw$ and $[v,w]=0$.
\end{enumerate}

Suppose we have an integrable complex structure $\J$ on $\g\times\g$. Again, we proceed as follows:
{\renewcommand\labelenumi{(\theenumi)}
\begin{enumerate}
\item There is a quasi-invariant vector $u$.
\item There is no $V$, non-trivial $\J$-invariant subspace of $\g$.
\item Any quasi-invariant vector $u$ must be generic -- adjoint $[u,\cdot]$ must have a non-zero real eigenvalue.
\end{enumerate}
}

Regarding (2) -- again such a $V$ would be 2-dimensional. We consider three cases:
\begin{enumerate}
\item $[V,V]=0$. Then $V=span\{e_1,e_2\}$ and there is a quasi-invariant $u=(x,y,1)$ -- note the last coefficient. For an $a \in V$ we compute $N(u,a)$
\begin{align*}
& [u,a]+J[Ju,a]+J[u,Ja]-[Ju,Ja] \\
=&\quad [u,a]+\lambda J[u,a] + J[u,Ja]-\lambda [u,Ja]  \\
=&\quad [e_3,a]+\lambda  J[e_3,a] + J[e_3,Ja]-\lambda [e_3,Ja] 
\end{align*}
because the other terms vanish. For $a=e_1$ and $Je_1=Xe_1+Ye_2$ (which forces $Je_2=\frac{-1-X^2}{Y}e_1-Xe_2$) we get
\begin{align*}
& [e_3,e_1]+\lambda J[e_3,e_1] + J[e_3,Je_1]-\lambda [e_3,Je_1]  &\\
=&\quad -e_1-\lambda Je_1 + J[e_3,Xe_1+Ye_2]-\lambda [e_3,Xe_1+Ye_2]  &\\
=&\quad -e_1-\lambda Xe_1-\lambda Ye_2 - X(Xe_1+Ye_2)-Y(Xe_1+Ye_2)-Y\frac{-1-X^2}{Y}e_1-Xe_2+\\
&\qquad +\lambda Xe_1+\lambda Ye_1+\lambda Y e_2 
\end{align*}
or
$$
\begin{cases}
-1-\lambda X-X^2-XY+1+X^2+\lambda X+\lambda Y=\lambda Y-XY=0 \\ 
-\lambda Y-XY-Y^2+XY+\lambda Y=Y^2=0
\end{cases}
$$
which is a contradiction -- $Y$ cannot be 0, since $\J$ does not have real eigenvectors ($e_1$ would then be one).
\item $[V,V]\neq 0$, $[V,V]\subset V$. This must be a 1-dimensional subspace. A non-zero vector $v'$ in this subspace is a non-zero eigenvector for any linearly independent vector $u'$ in $V$ -- this $u'$ can be assumed to be $(x,y,1)$ and since $v'$ must be proportional to $e_1$, it may be assumed to be precisely $e_1$. The third vector $w'=e_2$ is not in $V$ and completes a Jordan basis for the adjoint of $u'$. Write the quasi-invariant vector $u=xu'+ye_1+e_2$ (note the last coefficient) in that basis. We compute the full Nijenhuis bracket $N(u,e_1)$ with $\J e_1=Je_1=Xu'+Ye_1$.
\begin{align*}
& [u,e_1]+\lambda \J[u,e_1] + \J[u,Xu'+Ye_1]-\lambda [u,Xu'+Ye_1]  \\
=&\quad [xu',e_1]+\lambda \J[xu',e_1] + \J[xu'+ye_1+e_2,Xu'+Ye_1]-\lambda [xu'+ye_1+e_2,Xu'+Ye_1]  \\
=&\quad -x e_1-\lambda x J e_1+yX J e_1-xY J e_1+X J e_1+X \J e_2-\lambda yX e_1+\lambda xY e_1-\lambda Xe_1-\lambda X e_2  
\end{align*}
This, however, means that $X \J e_2$ is in $\g$, which is possible only if $X$ is zero, or $\g$ would be invariant under $J$. But then $\J$ has an invariant direction $e_1$, a contradiction.
\item $[V,V]\neq 0$, $[V,V]$ is not in $V$. $[V,V]$ is contained in $[\g,\g]=span\{e_1,e_2\}$ which intersects $V$ along a line $\mathbb{R} \cdot u'$. The adjoint of this $u'$ has a Jordan basis $\{u',v',w'\}$ for $w'\in V$ rescaled to be of the form $(a,b,1)$ and $v'=[u',w'] \in [V,V]$. We have that $[v',w']=2v'-u'$. The quasi-invariant vector $u=xu'+v'+zw'$ (note the middle coefficient) and $u'$ with $\J u'=Ju'=Xu'+Yw'$ give the full $N(u,u')$
\begin{align*}
& [u,u']+\lambda \J[u,u'] + \J[u,\J u']-\lambda [u,\J u'] & \\
=&\quad [xu'+v'+zw',u']+\lambda \J[xu'+v'+zw',u']+ \\
&\qquad +\J[xu'+v'+zw',Xu'+Yw']-\lambda[xu'+v'+zw',Xu'+Yw'] & \\
=&\quad [zw',u']+\lambda \J[zw',u']+\J[zw',Xu']+\J[xu'+v',Yw']-\lambda[xu'+v',Yw']-\lambda[zw',Xu']  &\\
=&\quad -zv'-\lambda z\J v'-zX\J v'+xY\J v'+2Y\J v'-Y\J u'-xY\lambda v'-\lambda  Y2v'+\lambda  Yu'+zX\lambda v' &\\
=&\quad (-z-xY\lambda-2\lambda  Y +zX\lambda)v' + (Y\lambda)u' -Y J u'+ (-\lambda z-zX+xY) \J v' 
\end{align*}
The last term must be zero since $\J v'$ does not lie in $\g$. Hence we have
\begin{align*}
& (-z-xY\lambda-2\lambda Y +zX\lambda ) v' + (Y\lambda) u' -Y(Xu'+Yw')  \\
=&\quad (-z-xY\lambda-2\lambda  Y +zX\lambda) v' + (Y\lambda-YX) u' +Y^2 w' 
\end{align*}
and so each term is zero. But again, the $w'$-part cannot be zero, since $\J$ does not have an invariant direction, a contradiction.
\end{enumerate}

This proves that $\J$ does not have a non-trivial invariant space in $\g$. Again, we note that for any basis $\{u,v,w\}$ in $\g$
\begin{itemize}
\item $\{u,v,w,\J u,\J v,\J w\}$ spans a 6-dimensional space and so it is a basis of $\g\times\g$.
\item $\{J^*u,J^*v,J^*w\}$ is a basis of $\g^*$.
\end{itemize}

We will now prove the last point -- a quasi-invariant vector $u$ cannot be of the form $(x,y,0)$. Suppose to the contrary that it can. 
\begin{enumerate}
\item $u=e_1$ -- then $N(e_1,e_3)$ reads (for $Je_3=Xe_1+Ye_2+Ze_3$)
\begin{align*}
& [e_1,e_3]+\lambda J[e_1,e_3]+J[e_1,Je_3]-\lambda[e_1,Je_3] \\
=&\quad e_1+\lambda Je_1+ZJe_1-\lambda Ze_1 = (1+\lambda^2)e_1
\end{align*}
\item $u=(x,y,0)$ for a non-zero $y$ with a Jordan basis $\{u,v,e_3\}$ -- again compute (with $Jv=Xu+Yv+Ze_3$) $N(u,v)$
\begin{align*}
& [u,v]+\lambda J[u,v] + J[u,Jv]-\lambda [u,Jv]  \\
=&\quad J[u,Ze_3]-\lambda [u,Ze_3] \\
=&\quad ZJv-\lambda Z v=ZXu+ZYv+Z^2e_3-\lambda Zv
\end{align*}
which gives $Z=0$. Now for $Je_3=Au+Bv+Cw$ compute $N(u,e_3)$
\begin{align*}
& [u,e_3]+\lambda J[u,e_3] + J[u,Je_3]-\lambda [u,Je_3]  \\
=&\quad v+\lambda Jv + J[u,Ce_3]-\lambda [u,Ce_3] \\
=&\quad v+\lambda Jv + CJv-\lambda C v \\
=&\quad (1+\lambda Y+CY-\lambda C)v+(\lambda X+CX)u
\end{align*}
Note we suppressed the vanishing $Ze_3$ term. This means $X(\lambda+C)=0$. However, if $\lambda+C$ equals zero  then the first coefficient reads $1+\lambda^2$, a contradiction. Hence we got $X=0$. We continue to suppress the vanishing terms in the last remaining bracket $N(v,e_3)$.
\begin{align*}
& [v,e_3]+J[Jv,e_3] + J[v,Je_3]-[Jv,Je_3]  \\
=&\quad 2v-u+2YJv-YJu+J[v,Ce_3]-[Yv,Ce_3]  \\
=&\quad 2v-u+2Y^2v-\lambda Yu+2CYv-\lambda Cu-2YCv+CYu  \\
=&\quad (2+Y^2+2CY-2CY)v+(-1-\lambda Y-\lambda C+CY)u
\end{align*}
which is of course a contradiction on $2+Y^2=0$.
\end{enumerate}

This proves that again only the generic vectors can be quasi-invariant.

For such a vector $u=(x,y,1)$ and its Jordan basis $\{u,v,w\}$ we compute $N(u,v)$ with $Jv=Xu+Yv+Zw$ and $Jw=Au+Bv+Cw$
\begin{align*}
& [u,v]+\lambda J[u,v] + J[u,Xu+Yv+Zw]-\lambda [u,Xu+Yv+Zw] & \\ 
=&\quad -v-\lambda  Jv -YJv+ZJv-ZJw+\lambda Yv-\lambda Zv+Z\lambda w & \\
=&\quad (-1+\lambda Y-\lambda Z)v+(Z\lambda)w+(-\lambda -Y+Z)Jv-ZJw & \\
=&\quad (-1+\lambda Y-\lambda Z+Y(-\lambda -Y+Z)-ZB)v+(Z\lambda+Z(-\lambda -Y+Z)-ZC)w + \\
&\qquad + (X(-\lambda -Y+Z)-ZA) u
\end{align*}
or 
$$
\begin{cases}
-1+\lambda Y-\lambda Z+Y(-\lambda -Y+Z)-ZB = 0 \\
Z\lambda+Z(-\lambda -Y+Z)-ZC = 0 \\
X(-\lambda -Y+Z)-ZA=0
\end{cases}
$$

Now compute $N(v,w)$
\begin{align*}
& [v,w]+J[Jv,w] + J[v,Jw]-[Jv,Jw] & \\
=&\quad 0+J[Xu,w]+J[v,Au]-[Xu+Yv+Zw,Au+Bv+Cw] & \\
=&\quad XJv-XJw+AJv-X[u,Bv+Cw]+A[u,Yv+Zw] & \\
=&\quad XJv-XJw+AJv+XBv-XCv+XCw-AYv+AZv-AZw & \\
=&\quad (XB-XC-AY+AZ)v+(XC-AZ)w+(X+A)Jv-XJw & \\
=&\quad (XB-XC-AY+AZ+XY+AY-XB)v + \\
&\qquad +(XC-AZ+XZ+AZ-XC)w+(X^2+AX-XA)u
\end{align*}
or 
$$
\begin{cases}
XC+AZ+XY = 0 \\
XZ = 0 \\
X^2=0 \\
\end{cases}
$$
which gives $X=0$ and subsequently $AZ=0$. If $Z$ was 0, then the $v$-coefficient in the previous equation would read $-1-Y^2$, which cannot be zero. Before we jump to any further conclusions, compute $N(u,w)$
\begin{align*}
& [u,w]+\lambda J[u,w] + J[u,Jw]-\lambda[u,Jw] & \\
=&\quad v-w+\lambda Jv-\lambda  Jw+J[u,Bv+Cw]-\lambda[u,Bv+Cw] & \\
=&\quad v-w+\lambda Xu + \lambda Yv+\lambda Zw -\lambda  Au-\lambda  Bv-\lambda  Cw- B(Xu+Yv+Zw) +\\
&\qquad +C(Xu+Yv+Zw)-C(Au+Bv+Cw)+\lambda Bv-\lambda Cv+\lambda  Cw 
\end{align*}
or 
$$
\begin{cases}
1+\lambda Y-\lambda B-BY+CY-CB+\lambda B-\lambda C = 0 \\
-1+\lambda Z-\lambda  C-BZ+CZ-C^2+\lambda  C =0 \\
\lambda X -\lambda A-BX+CX-CA=0 
\end{cases}
$$
Putting these equations together we get
$$
\begin{cases}
X = 0 \\
A = 0 \\
-1-\lambda Z-Y^2+ZY-ZB = 0 \\
1+\lambda Y- BY+CY-BC -\lambda C = 0 \\
-YZ+Z^2-ZC = 0 \\
-1+\lambda Z -BZ+CZ-C^2 = 0
\end{cases}
$$
The last two give
\begin{align*}
C&=-Y+Z\\
B&=\frac{-1+\lambda Z +CZ-C^2}{Z}
\end{align*}
that combine into
$$
B=\frac{\lambda Z+YZ-Y^2-1}{Z}
$$

We now compute the $\g^*$-parts of the Nijenhuis brackets to examine $[J^*u,\cdot]$ in the basis $\{J^*u,J^*v,J^*w\}$. The bracket $N(u,v)$ gives
\begin{align*}
& \lambda J^*[u,v]+J^*[u,Yv+Zw]-[J^*u,J^* v]  \\
=&\quad -\lambda J^*v-YJ^*v+ZJ^*v-ZJ^*w-[J^*u,J^* v] 
\end{align*}

Similarly for $N(u,w)$
\begin{align*}
& \lambda J^*[u,w]+J^*[u,Bv+Cw]-[J^*u,J^* w]  \\
=&\quad \lambda J^*v-\lambda J^*w-BJ^*v+CJ^*v-CJ^*w-[J^*u,J^* w] 
\end{align*}

It is again easily checked that the last Nijenhuis bracket gives $[J^*v,J^* w]=0$ (which we again don't need). This presents the adjoint $[J^*u,\cdot]$ in the basis $\{J^*u,J^*v,J^*w\}$ in the following form
$$
\left[\begin{array}{ccc}
0 & 0 & 0 \\
0 & -\lambda-Y+Z & \lambda - B + C  \\
0 & -Z & -\lambda-C
\end{array}\right]
$$
This matrix must have a non-zero real eigenvalue since $J^*u$ is quasi-invariant. This eigenvalue must be double. We compute the relevant part of the characteristic polynomial
\begin{align*}
& (-\lambda-Y+Z -t)(-\lambda-C-t)+Z(\lambda - B + C) \\
=&\quad t^2+t(\lambda+Y-Z+\lambda+C)+Z(\lambda - B+C)+\lambda^2+\lambda C+\lambda Y+YC-Z\lambda-CZ \\
=&\quad t^2 + t(2\lambda+Y+C-Z)-BZ+\lambda^2+\lambda C+\lambda Y+YC 
\end{align*}
and its discriminant 
\begin{align*}
& (2\lambda+Y+C-Z)^2-4(-BZ+\lambda^2+\lambda C+\lambda Y+YC) & \\
=&\quad 4\lambda^2+Y^2+C^2+Z^2+4\lambda Y+4\lambda C-4\lambda Z + \\
&\qquad +2YC-2YZ-2ZC+4BZ-4\lambda^2-4\lambda C-4\lambda Y-4YC & \\
=&\quad Y^2+C^2+Z^2-4\lambda Z -2YC-2YZ-2ZC+4BZ  
\end{align*}
which must be zero to yield a unique root. Observe that $B$ appears only once and multiplied by a non-zero factor $Z$. We can therefore write
$$
B=-\frac{1}{4Z}\left( Y^2+C^2+Z^2-4\lambda Z -2YC-2YZ-2ZC\right)
$$
and substitute $C=-Y+Z$ to get
$$
B=\frac{\lambda Z+YZ-Y^2}{Z}
$$

The two expressions we obtained for $B$ differ by a non-zero element $\frac{1}{Z}$, giving a contradiction. Hence there are no integrable complex structures on this algebra as well.
\end{proof}

We will now present a concrete example of an integrable complex structure in each possible case. Recall how we defined the complex structures in the proof of Theorem~\ref{main}: $\J u = u^*$, $\J v=w$, $\J v^* = w^*$. To exhibit such a structure we only need to write down the appropriate $u$, $v$, and $w$.

\begin{pro}\label{STR}
The following vectors satisfy the conditions given in the proof of Theorem~\ref{main} (and thus give integrable complex structures as above) for the corresponding algebras of Proposition~\ref{alg}
{\renewcommand\labelenumi{(\theenumi)}
\begin{enumerate}
\item $u=e_1$, $v=e_2$, $w=e_3$.
\item $u=e_3$, $v=e_1$, $w=e_2$.
\item $u=e_3$, $v=e_1$, $w=e_2$.
\item for $\theta=1$, $u=e_3$, $v=e_1$, $w=e_2$.
\setcounter{enumi}{5}
\item $u=e_3$, $v=e_1$, $w=e_2$.
\item $u=e_2$, $v=e_1$, $w=e_3$.
\item $u=e_3$, $v=e_1$, $w=e_2$.
\end{enumerate}
}
\end{pro}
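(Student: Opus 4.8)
The plan is a direct case-by-case verification. For each of the seven algebras listed, one substitutes the proposed basis $\{u,v,w\}$ into the bracket relations of Proposition~\ref{alg} and checks that the hypotheses of one of the two cases examined in the proof of Theorem~\ref{main} hold. Recall those hypotheses: the first case asks that $\{u,v,w\}$ be a Jordan basis for $[u,\cdot]$ with $[u,v]=Av+Bw$ and $[u,w]=-Bv+Aw$ for some real $A$ and some $B\neq 0$; the second case asks that $[u,v]=\alpha v$ and $[u,w]=\alpha w$ for a single real $\alpha$, in which case $v$ and $w$ span an eigenspace and $\{u,v,w\}$ is automatically a Jordan basis. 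Once the relevant two brackets are written down, the matching is immediate, so the proof amounts to a short table.

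First I would dispatch types (1), (2), and (3): the chosen $u$ (namely $e_1$ in the abelian case and $e_3$ otherwise) is central, so $[u,v]=[u,w]=0$ and we are in the second case with $\alpha=0$, the triple-eigenvalue situation. Next, for type (4) with $\theta=1$ one computes $[e_3,e_1]=-e_1$ and $[e_3,e_2]=-e_2$, that is $[u,v]=-v$ and $[u,w]=-w$: again the second case, now with the double eigenvalue $\alpha=-1$; here one also notes that the $(-1)$-eigenspace of $[e_3,\cdot]$ is genuinely two-dimensional, so $\{u,v,w\}$ is indeed a Jordan basis.

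The remaining types (6), (7), and (8) fall under the first case. Using antisymmetry of the bracket: for (6) one gets $[e_3,e_1]=-\theta e_1+e_2$ and $[e_3,e_2]=-e_1-\theta e_2$, so $A=-\theta$ and $B=1$; for (7), which is $\eseldwa$, one gets $[e_2,e_1]=-e_3$ and $[e_2,e_3]=e_1$, so $A=0$ and $B=-1$; for (8), which is $\otrzy$, one gets $[e_3,e_1]=e_2$ and $[e_3,e_2]=-e_1$, so $A=0$ and $B=1$. In each case $B\neq 0$ and $\{u,v,w\}$ is the standard basis up to reordering, hence a Jordan basis for $[u,\cdot]$ in the prescribed normal form, so the complex structure constructed in the proof of Theorem~\ref{main} applies verbatim.

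I do not expect a genuine obstacle here; the proof is mechanical. The only points demanding care are bookkeeping ones: getting the signs right when passing from the relations as listed in Proposition~\ref{alg} to the normalised form $[u,w]=-Bv+Aw$ (note the minus sign in front of $B$), ordering $v$ and $w$ consistently with that choice, and, in the instances that fall under the second case, confirming that the relevant eigenvalue has the claimed multiplicity so that $\{u,v,w\}$ qualifies as a Jordan basis. All of these are settled by inspection of the brackets above.
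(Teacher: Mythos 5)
Your verification is correct and coincides with what the paper intends: Proposition~\ref{STR} is stated without a separate proof precisely because it reduces to the mechanical check you carry out, namely that the central (or zero-adjoint) choices of $u$ in types (1)--(3), the double eigenvalue $\alpha=-1$ in type (4) with $\theta=1$, and the complex eigenvalues $A\pm Bi$ with $B\neq 0$ in types (6)--(8) place each algebra under one of the two cases in the proof of Theorem~\ref{main}. Your sign bookkeeping (in particular $A=-\theta$, $B=1$ for type (6) and $B=-1$ for type (7)) matches the normal form $[u,v]=Av+Bw$, $[u,w]=-Bv+Aw$ used there.
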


\section{Integrable complex structures on $\oen\times\oen$}

To finish with a foray into higher dimensions, we give a concrete example of an integrable complex structure on $\oen\times\oen$. While this example can be recovered from \cite{1}, it is a by-product of the above considerations, and our explicit form proves, perhaps, useful for future geometric applications.

Recall that $\oen$ is generated by the elementary antisymmetric matrices
$$
e_{ij}=\big[ \delta_i^j\big]-\big[ \delta_j^i\big]
$$
where $\delta$ is the Kronecker symbol. Each matrix has a single 1 in its $i$-th row and $j$-th column, and $-1$ in the opposite entry. This is only a generating set, we will be using the basis $\{e_{ij}\}_{i<j}$ shortly. Recall that the bracket in $\oen$ is given by
$$
[e_{ij},e_{jk}]=e_{ik}
$$

Using our previous notation to distinguish the two copies of $\oen$, we define the following complex structure~$\J$~as follows:
$$
\begin{cases}
\J e_{12} = e_{12}^* \\
\J e_{1i} = e_{2i} & \text{for $i>2$} \\
\J e_{1i}^* = e_{2i}^* & \text{for $i>2$} \\
\J e_{34} = e_{34}^* \\
\J e_{3i} = e_{4i} & \text{for $i>4$} \\
\J e_{3i}^* = e_{4i}^* & \text{for $i>4$} \\
\vdots \\
\J e_{(n-1)n}= e_{(n-1)n}^* & \text{or}\\
\begin{cases}
\J e_{(n-2)n}= e_{(n-1)n} \\
\J e_{(n-2)n}^*= e_{(n-1)n}^* 
\end{cases}
\end{cases}
$$
where the slight discrepancy comes from parity of $n$ -- but is without any impact on the construction. Note we chose a family of quasi-invariant vectors (such as $e_{12}$) and assign to them subspaces of their eigenspaces. Note however that, for example, $e_{14}$ is a (complex) eigenvector for both $e_{12}$ and $e_{34}$, but we assign it to the former. 

\begin{thm}
For every $n\geq 2$, the above complex structure is integrable.
\end{thm}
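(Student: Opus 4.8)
The plan is to reduce the integrability of $\J$ on $\oen\times\oen$ to a finite, manageable collection of Nijenhuis brackets by exploiting the block structure of the construction. First I would observe that the definition partitions the index set $\{1,\dots,n\}$ into consecutive pairs $\{1,2\},\{3,4\},\dots$ (with a leftover index if $n$ is odd, which is harmless). For the $k$-th pair $\{2k-1,2k\}$, the vector $u_k:=e_{(2k-1)(2k)}$ plays the role of the quasi-invariant vector $u$ of Theorem~\ref{main}: the adjoint $[u_k,\cdot]$ acts on the span of $e_{(2k-1)i}$ and $e_{(2k)i}$ (for $i$ outside the pair) as a rotation, so each such pair $\{e_{(2k-1)i}, e_{(2k)i}=\J e_{(2k-1)i}\}$ is a $2$-dimensional $\J$-invariant subspace, exactly matching the setup of case (1) in the existence proof. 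So, within a single block, $N$ vanishes by precisely the computation already carried out there (the bracket relations $[e_{ij},e_{jk}]=e_{ik}$ give $[u_k, e_{(2k-1)i}] = \pm e_{(2k)i}$, etc., which is the $A=0$, $B=\pm1$ instance).

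Next I would handle cross-block terms, i.e. Nijenhuis brackets $N(a,b)$ where $a$ comes from block $k$ and $b$ from block $\ell$ with $k\neq\ell$. Here the key point is that $\oen$ is generated by the $e_{ij}$ with the single relation type $[e_{ij},e_{jk}]=e_{ik}$ (and $[e_{ij},e_{kl}]=0$ when $\{i,j\}\cap\{k,l\}=\emptyset$), so a bracket of a block-$k$ generator with a block-$\ell$ generator is either zero or is again some $e_{pq}$. By Remark~\ref{redu} it suffices to check $N$ on pairs of \emph{basis} vectors drawn from the list $\{e_{ij}\}_{i<j}$ together with their $\J$-images; and by the $\g$/$\g^*$ symmetry it suffices to check the $\g$-part on pairs both lying in $\g$. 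So the whole verification comes down to: for generators $e_{pq}$ and $e_{rs}$, compute $[e_{pq},e_{rs}] + \J[\J e_{pq}, e_{rs}] + \J[e_{pq}, \J e_{rs}] - [\J e_{pq}, \J e_{rs}]$ and see it vanishes, where $\J$ on each generator is read off the case table. I would organize this by the size of $\{p,q\}\cap\{r,s\}$ and by which ``half'' ($e_{1*}$-type versus $e_{2*}$-type) each generator sits in.

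The main obstacle — and the step deserving genuine care rather than hand-waving — is the case where two generators share exactly one index that is \emph{not} the ``anchor'' index $2k-1$ of their block, e.g. $N(e_{1i}, e_{3i})$ or $N(e_{2i}, e_{4i})$ for a common $i$ exceeding both block-pairs: here $[e_{1i}, e_{3i}]=0$ but the twisted terms involve $\J e_{1i}=e_{2i}$, $\J e_{3i}=e_{4i}$, and one must check that $\J[e_{2i}, e_{3i}] + \J[e_{1i}, e_{4i}] - [e_{2i}, e_{4i}]$ collapses. Each of those inner brackets is again a single $e_{pq}$ (or zero), and the point is that $\J$ is defined consistently enough across blocks that the images cancel in pairs; I would verify this by a short case split on the relative order of the four indices $1,2,i$ versus $3,4,i$. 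A subtler sub-case is $N(e_{1i}, e_{1j})$ or $N(e_{1i},e_{3j})$ with $i\neq j$ both ``outside'' indices: then $[e_{1i},e_{1j}]=0$ while $[\J e_{1i}, e_{1j}]=[e_{2i}, e_{1j}]$ is nonzero only if $i$ or $j$ coincides — so generically all four terms vanish, and one need only track the few coincidence patterns.

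Finally I would remark that the leftover-index modification for odd $n$ changes nothing: the ``or'' branch in the definition just declares $e_{(n-2)n}$ a quasi-invariant vector with $\J e_{(n-2)n}=e_{(n-1)n}$ and $\J$-invariance of that $2$-plane, which again falls under case (1)/(2) of Theorem~\ref{main}'s existence proof, and the cross-block analysis above is insensitive to whether the last block has one or two ``anchor'' indices. Assembling the block-internal vanishing, the cross-block vanishing, and the $\g^*$-symmetric copies via Remark~\ref{redu} gives $N\equiv0$, hence integrability for all $n\geq2$.
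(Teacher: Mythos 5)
Your overall strategy coincides with the paper's: verify $N$ directly on pairs of basis vectors $e_{pq}$, organized by how their index sets overlap and by which block each generator belongs to, and dispose of the remaining pairs via Remark~\ref{redu}, the $\g$/$\g^*$ symmetry, and the triviality of $2$-dimensional invariant planes. That framework is sound, and your reduction of the block-internal pairs $N(u_k,e_{(2k-1)i})$ to case (1) of the existence proof of Theorem~\ref{main} is correct, since that computation uses only $[u,v]$, $[u,w]$ and the vanishing of $[u^*,\cdot]$ on $\g$. However, both of the cases you single out as the crux rest on incorrect bracket computations, and the cancellations you propose to verify are false as stated. First, $[e_{1i},e_{3i}]$ is not zero: the two generators share the index $i$, so $[e_{1i},e_{3i}]=-[e_{1i},e_{i3}]=-e_{13}$. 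Consequently the three-term expression you display, $\J[e_{2i},e_{3i}]+\J[e_{1i},e_{4i}]-[e_{2i},e_{4i}]$, does \emph{not} collapse: it equals $\J(-e_{23})+\J(-e_{14})+e_{24}=e_{13}-e_{24}+e_{24}=e_{13}$, and the Nijenhuis tensor vanishes on this pair only because the omitted first term $-e_{13}$ cancels it. Second, $[e_{1i},e_{1j}]$ is likewise not zero for $i\neq j$: these generators share the index $1$, so $[e_{1i},e_{1j}]=-e_{ij}$, and $[\J e_{1i},\J e_{1j}]=[e_{2i},e_{2j}]=-e_{ij}$ as well. So it is not true that ``generically all four terms vanish''; rather the first and fourth terms are equal and cancel (this is precisely the paper's case of two vectors assigned to the same quasi-invariant vector), while only the two middle terms vanish.

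Both slips have the same source: you track only the ``outer'' indices $i,j$ and forget that the anchor indices $1,2$ (resp.\ $3,4$) are themselves shared between, or adjacent across, the generators in question. The conclusion $N\equiv 0$ does survive in every case, but the mechanism is pairwise cancellation of nonzero terms rather than termwise vanishing, so the verification as you describe it would fail if carried out literally. Once the brackets are corrected, the argument goes through and essentially reproduces the paper's proof, which organizes the same check into four index patterns: two quasi-invariant vectors; two vectors assigned to the same quasi-invariant vector; two vectors assigned to different quasi-invariant vectors sharing a middle index; and a quasi-invariant vector against an assigned vector from a lower block (where $\J$ of the quasi-invariant vector lands in $\g^*$ and kills two of the four terms).
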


\begin{proof}
We need to compute the Nijenhuis bracket only in the four following cases. Assume $i<j<k<l$.
\begin{enumerate}
\item Two different quasi-invariant vectors. But in
$$
[e_{ij},e_{kl}]+\J[\J e_{ij},e_{kl}]+\J[e_{ij},\J e_{kl}]-[\J e_{ij},\J e_{kl}]
$$
every term vanishes by the choice of indices.
\item Two vectors assigned to the same quasi-invariant vector. Then
\begin{align*}
&[e_{ij},e_{ik}]+\J[\J e_{ij},e_{ik}]+\J[e_{ij},\J e_{ik}]-[\J e_{ij},\J e_{ik}] & \\
=\quad&[e_{ij},e_{ik}]+\J[e_{(i+1)j},e_{ik}]+\J[e_{ij},e_{(i+1)k}]-[e_{(i+1)j},e_{(i+1)k}] & \\
=\quad&-e_{jk}+e_{jk}&
\end{align*}
because the brackets under $\J$ vanished (the assigned vectors as written cannot bear indices differing by 1).
\item Two vectors assigned to different quasi-invariant vectors. The only non-trivial situation is 
\begin{align*}
&[e_{ij},e_{jk}]+\J[\J e_{ij},e_{jk}]+\J[e_{ij},\J e_{jk}]-[\J e_{ij},\J e_{jk}] & \\
=\quad&[e_{ij},e_{jk}]+\J[e_{(i+1)j},e_{jk}]+\J[e_{ij},e_{(j+1)k}]-[e_{(i+1)j},e_{(j+1)k}] & \\
=\quad&e_{ik}+\J e_{(i+1)k} = e_{ik}-e_{ik}&
\end{align*}
\item Quasi-invariant vector and a vector assigned to a quasi-invariant vector of lower indices (the reverse situation is trivial, because none of the indices can match). Note the index alternation in the last step that comes from the inequality between $i$ and $j$.
\begin{align*}
&[e_{ij},e_{j(j+1)}]+\J[\J e_{ij},e_{j(j+1)}]+\J[e_{ij},\J e_{j(j+1)}]-[\J e_{ij},\J e_{j(j+1)}] & \\
=\quad&[e_{ij},e_{j(j+1)}]+\J[e_{(i+1)j},e_{j(j+1)}]+\J[e_{ij},e_{j(j+1)}^*]-[e_{(i+1)j},e_{j(j+1)}^*] & \\
=\quad&e_{i(j+1)}+\J e_{(i+1)(j+1)} = e_{i(j+1)}-e_{i(j+1)}&
\end{align*}
\item Every other case is either symmetric (concerns the $\oen^*$ counterparts), redundant (by Proposition~\ref{redu}) or trivial (concerns a 2-dimensional invariant space). 
\end{enumerate}
\end{proof}

\section{References}


\begin{thebibliography}{99}

\bibitem{andr} A. Andrada, M. L. Barberis, I. Dotti, \textit{Classification of abelian complex structures on 6-dimensional Lie algebras}, J. London Math. Soc. \textbf{83}, 2011, 232--255.

\bibitem{nilmanifolds} M. Ceballos, A. Otal, L. Ugarte, R. Villacampa, \textit{Invariant complex structures on 6-nilmanifolds: classification,  Fr\"olicher spectral sequence and special Hermitian metrics.}, J. Geom. Anal.   \textbf{26}, 2016, no. 1, 252--286.

\bibitem{MAR} A. Czarnecki, M. Sroka, R. Wolak, \textit{On normality of f.pk-structures on g-manifolds}, preprint, available from 	https://arXiv:1609.08601

\bibitem{mag1} L. Magnin, \textit{Left invariant complex structures on $U(2)$ and $SU(2) \times SU(2)$ revisited}, preprint, available from https://arxiv.org/abs/0809.1182

\bibitem{mag2} L. Magnin, \textit{Two examples about zero torsion linear maps on Lie algebras}, preprint, available from https://arxiv.org/abs/1001.2653

\bibitem{bia} J. Patera, R.T. Sharp, P. Winternitz, H. Zassenhaus, \textit{Invariants of real low dimension Lie algebras}, J. Mathematical Phys. \textbf{17} (6), 1976, 986--994.

\bibitem{sal} S.M. Salamon, \textit{Complex structures on nilpotent Lie algebras}, Journal of Pure and Applied Algebra \textbf{157}, 2001, 311--333.

\bibitem{1} H. Samelson, \textit{A class of complex analytic manifolds}, Portugal. Math. \textbf{12}, 1953, 129--132.

\bibitem{classif} N. K. Smolentsev, \textit{Complex, Symplectic, and Contact Structures on Low-Dimensional Lie Groups}, Journal of Mathematical Sciences \textbf{207}, 2015, 551--613.

\bibitem{2} H. C. Wang, \textit{Closed manifolds with homogeneous complex structure}, Am. J. Math. \textbf{76}, 1954, 1--37.
\end{thebibliography}
\end{document}